\def\@logofont{\footnotesize}
\def\@setaddresses{\par
  \nobreak \begingroup
  \footnotesize
  \def\author##1{\nobreak\addvspace\bigskipamount}%
  \def\\{\par\nobreak}%
  \interlinepenalty\@M
  \def\address##1##2{\begingroup
    \par\addvspace\bigskipamount\indent
    \@ifnotempty{##1}{(\ignorespaces##1\unskip) }%
    {\scshape\ignorespaces##2}\par\endgroup}%
  \def\curraddr##1##2{\begingroup
    \@ifnotempty{##2}{\nobreak\indent\curraddrname
      \@ifnotempty{##1}{, \ignorespaces##1\unskip}\/:\space
      ##2\par}\endgroup}%
  \def\email##1##2{\begingroup
    \@ifnotempty{##2}{\nobreak\indent\emailaddrname
      \@ifnotempty{##1}{, \ignorespaces##1\unskip}\/:\space
      \ttfamily##2\par}\endgroup}%
  \def\urladdr##1##2{\begingroup
    \def~{\char`\~}%
    \@ifnotempty{##2}{\nobreak\indent\urladdrname
      \@ifnotempty{##1}{, \ignorespaces##1\unskip}\/:\space
      \ttfamily##2\par}\endgroup}%
  \addresses
  \endgroup
}
\renewcommand*\subjclass[2][2010]{%
  \def\@subjclass{#2}%
  \@ifundefined{subjclassname@#1}{%
    \ClassWarning{\@classname}{Unknown edition (#1) of Mathematics
      Subject Classification; using '2000'.}%
  }{%
    \@xp\let\@xp\subjclassname\csname subjclassname@#1\endcsname
  }%
}
\theoremstyle{plain}
\newtheorem{theorem}{Theorem}[section]
\newtheorem{lemma}[theorem]{Lemma}
\newtheorem{corollary}[theorem]{Corollary}
\newtheorem{conjecture}[theorem]{Conjecture}
\theoremstyle{definition}
\newtheorem{definition}[theorem]{Definition}
\newtheorem{example}[theorem]{Example}
\newtheorem{remark}[theorem]{Remark}
\begin{document}

\title{On local matching property in groups and vector spaces}

\author[M. Aliabadi]{Mohsen Aliabadi}
\address{Mohsen Aliabadi \\ 
Department of Mathematics, Statistics, and Computer Science\\ University of  Illinois\\  851 S. Morgan St, Chicago, IL 60607, USA}
\email{maliab2@uic.edu}

\author[M. V. Janardhanan]{Mano Vikash Janardhanan}
\address{Mano Vikash Janardhanan\\ 
Department of Mathematics, Statistics, and Computer Science\\ University of  Illinois\\  851 S. Morgan St, Chicago, IL 60607, USA}
\email{mjana2@uic.edu}
\thanks{2010 Mathematics Subject Classification: Primary: 05D15; Secondary: 11B75, 20D60, 20F99, 12F99.}

%\subjclass{Primary: 05D15; Secondary: 11B75, 20D60, 20F99, 12F99.}
\keywords{abelian group, local matching property, torsion-free group,vector space.}

\begin{abstract}
In this paper, we define locally matchable subsets of a group which is derived from the concept of matchings in groups and used as a tool to give alternative proofs for existing results in matching theory. We also give the linear analogue of local  matching property for subspaces in a field extension. 
Our tools mix additive number theory, combinatorics and algebra.
\end{abstract}

\maketitle

\section{Introduction}
The notion of {\it matchings} in groups was used to study an old problem of Wakeford concerning canonical forms for symmetric 
tensors \cite{11}.  Losonczy in \cite{8} introduced matchings in order to generalize a geometric property of lattices in Euclidean space.
A matching in an abelian group $(G,+)$ is a bijection $f:A\to B$, where $A$, $B$ are finite subsets of $G$ such that $0\not\in B$, fulfilling $a+f(a)\not\in A$, for all $a\in A$, and $G$ is said to have the {\it matching property}, if matching always exist, as long as $A$ and $B$ are finite of the same cardinality. This topic has found some interest in literature, for example, in 2006 Eliahou-Lecouvey have generalized Losonczy's results to arbitrary groups \cite{1}, and in 2010 Eliahou-Lecouvey went over to subspaces in field extensions \cite{6}.

The subject of the present paper is to consider {\it local matchings}: given a proper subgroup $H<G$ such that $H\cap B\neq \emptyset$ and $a+H\subseteq A$, for some $a\in A$, there is a bijection $f:A'\to H\cap B$, for some $A'\subseteq A$, such that $a+f(a)\not\in A$, for all $a\in A'$. In this case, $A$ is called to be {\it locally matched} to $B$. Any matching  being a local matching, it is natural to ask whether conversely a local matching property implies the matching property. The answer to this question is ``Yes" and we will use this result to give an alternative proof for Losonczy's main result in \cite{8}. Moreover, these questions are also discussed in the context of subspaces in field extensions.

The purpose of this paper is to find the relations between local  matching property and matching property in groups and vector spaces to give alternative proofs for existing results on matching property for groups and also its linear analogue.  Section 2 is devoted to the results proved  on matching in groups and vector spaces and also some tools of additive number theory required to prove our main results. 
In  Section 3, we will show the equivalence between matching and local  matching for subsets of a group. %which leads to some generalizations of author's results in \cite{}. 
Section 4  concerned with the linear analogue  of one  of Losonzy's results on matchings for cyclic groups.  %Furthermore, we will present a dimension criteria for the primitive subspaces related to our results in Theorem 4.1.%would be discussed in Theorem .
\,Finally, in Section 5, we   show that %if $A$ is matched to $B$, then $A$ it is locally matched to $B$ in the sense of vector spaces. However, the  converse is still not clear in the general case. We will see in Theorem 5.2 that 
\,for vector spaces in a field extension whose 
algebraic elements are separable, the linear local matching property implies the matching property.% As an application, we give an alternative proof for authors' main result in \cite{6}. % which is a generalization of auther's results in \cite{6} on linear matching property in field extensions. %The interested reader is encouraged  to investigate this property for other families of field extensions.
\section{Preliminaries}
%The following theorems are known about matching for groups and vector spaces. As we already mentioned, our goal in this paper is to prove their  more general cases.
%\begin{theorem}\label{t2.1}
 %An abelian group $G$ has the matching property if and only if $G$ is torsion-free or cyclic of 
%prime order \cite{8}.
%\end{theorem}
%\begin{theorem}\label{t2.2}
% Let  $G$ be a non-trivial finite cyclic group. Suppose we are given non-empty subsets $A$ and $B$ 
%of $G$  such that $\# A=\# B$ and every element of $B$ is a generator of $G$. Then there exists at least one matching 
%from $A$ to $B$ \cite{8}.
%\end{theorem}
First, we define the matching property for subspaces in a field extension. 
 Let $K\subset L$ be a field extension and $A$ and $B$ be $n$-dimensional $K$-subspaces of 
the field extension $L$. Let $\mathcal{A}=\{a_1,\ldots,a_n\}$ and $\mathcal B=\{b_1,\ldots,b_n\}$ be bases of $A$ and 
$B$, respectively. It is said that $\mathcal A$ is \textit{matched} to  $\mathcal B$ if
\[
a_ib\in A \;\Rightarrow \; b\in \langle b_1,\ldots,\hat{b}_i,\ldots,b_n\rangle,
\]
for all $b\in B$ and $i=1,\ldots,n$, where $\langle b_1,\ldots,\hat{b}_i,\ldots,b_n\rangle$ is the hyperplane of $B$ 
spanned by the set $\mathcal B\setminus \{b_i\}$; moreover, it is said that $A$ is \textit{matched } to $B$ if every 
basis $\mathcal A$ of $A$ can be matched to a basis $\mathcal B$ of $B$. As it is seen, the matchable bases are defined 
in a natural way based on the definition of matching in a group. Indeed, we can consider $\mathcal{A}$ and 
$\mathcal{B}$ as subsets of the multiplicative group $L^*$ and so the bijection $a_i\mapsto b_i$ is a matching in the group 
setting sense. %\\
It's said $L$ has the \textit{linear matching property} if, for every $n\geq1$
and every $n$-dimensional subspaces $A$ and $B$ of $L$ with $1\not\in B$, the subspaces $A$ is matched with $B$.  A {\it strong matching} from $A$ to $B$ is a linear isomorphism $\varphi:A\to B$ such that any basis $\mathcal{A}$ of $A$ is matched to the basis $\varphi(\mathcal{A})$ of $B$. It is proved that there is a strong  matching from $A$ to $B$ if and only if $AB\cap A=\{0\}$. In this case, any isomorphism $\varphi:A\to B$ is a strong matching \cite{6}.\\
%\\[2mm]
Now, our definition for matchable subsets of two matchable bases:%\\%[5mm]
\begin{definition}
 Let $\tilde{A}$ and $\tilde{B}$ be two non-zero $m$-dimensional  $K$-subspaces of $A$ and $B$, respectively. We say that 
$\tilde{A}$ is {\it $A$-matched} to $\tilde{B}$, if for any basis $\tilde{\mathcal{A}}=\{a_1,\ldots,a_m\}$ of $\tilde{A}$, 
there exists a basis $\tilde{\mathcal{B}}=\{b_1,\ldots,b_m\}$ of $\tilde{B}$ for which $a_ib_i\not\in A$, for 
$i=1,\ldots,m$. In this case, it is also said that $\tilde{\mathcal{A}}$ is {\it $A$-matched} to $\tilde{\mathcal{B}}$.
\end{definition}
The following is the linear analogue of locally matchable subsets for the vector spaces in a field extension.
\begin{definition}
 Let $K\subset L$ be a field extension and $A$, $B$ be two $n$-dimensional $K$-subspaces of 
$L$.  We say that $A$ is {\it  locally matched} to $B$ if for any intermediate subfield $K\subset H\subsetneqq L$ with $H\cap B\neq\{0\}$ and $aH\subseteq 
A$, for some $a\in A$, one can find a subspace $\tilde{A}$ of $A$ such that $\tilde{A}$ is $A$-matched to $H\cap B$.
\end{definition}
\begin{definition}
We say that $K\subset L$ has the {\it linear local matching property} if, for every $n\geq1$ and every $n$-dimensional subspaces $A$ and $B$ of $L$ with $1\not\in B$, the subspace $A$ is locally matched to $B$.
\end{definition}
The following theorem is a dimension criteria for matchable bases \cite[Proposition 3.1]{6}. This will be used as a tool to prove Theorem \ref{t4.2}. For more results on linear version of matchings  see  \cite{3}.
\begin{theorem}\label{t2.4}
 Let $K\subset L$ be a field extension and $A$ and $B$ be two $n$-dimensional $K$-subspaces of 
$L$. Suppose that $\mathcal A=\{a_1,\ldots,a_n\}$ is a basis of $A$. Then $\mathcal A$ can be matched to a basis of 
$B$ if and only if, for all $J\subseteq\{1,\ldots,n\}$, we have:
\[
\dim_K\underset{i\in J}{\bigcap}\left(a_i^{-1}A\cap B\right)\leq n-\# J.
\]
\end{theorem}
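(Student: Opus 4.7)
\bigskip
\noindent\textbf{Proof plan.} The plan is to translate the matching condition for the basis $\mathcal{A}$ into a purely vector-space transversal problem on $B$, and then invoke the linear (Rado) version of Hall's marriage theorem. For $i=1,\ldots,n$ set
\[
V_i \;=\; a_i^{-1}A\cap B,
\]
which are $K$-subspaces of $B$. Unwinding the definition of matching, a basis $\{b_1,\ldots,b_n\}$ of $B$ is matched to $\mathcal{A}$ if and only if $V_i\subseteq\langle b_1,\ldots,\widehat{b_i},\ldots,b_n\rangle$ for every $i$; equivalently, letting $b_1^\ast,\ldots,b_n^\ast\in B^\ast$ denote the dual basis, the matching condition is $b_i^\ast\in V_i^\perp$ for every $i$. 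So a matched basis exists iff one can select $\ell_i\in V_i^\perp$ for $i=1,\ldots,n$ with $\ell_1,\ldots,\ell_n$ linearly independent in $B^\ast$.

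For the ``only if'' direction I would argue directly: given a matched basis $\{b_1,\ldots,b_n\}$, the inclusion $V_i\subseteq\langle b_j:j\neq i\rangle$ yields, for any $J\subseteq\{1,\ldots,n\}$,
\[
\bigcap_{i\in J}V_i \;\subseteq\; \bigcap_{i\in J}\langle b_j:j\neq i\rangle \;=\; \langle b_j:j\notin J\rangle,
\]
whose dimension is $n-\#J$.

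For the ``if'' direction I would invoke the vector-space analogue of Hall's theorem (Rado's selection principle): given subspaces $U_1,\ldots,U_n$ of a vector space $W$, there exist $\ell_i\in U_i$ which are linearly independent iff $\dim_K\sum_{i\in J}U_i\geq\#J$ for every $J\subseteq\{1,\ldots,n\}$. Applying this with $U_i=V_i^\perp$ inside $W=B^\ast$, and using the standard duality
\[
\sum_{i\in J}V_i^\perp \;=\; \Bigl(\bigcap_{i\in J}V_i\Bigr)^{\!\perp},\qquad \dim_K\Bigl(\bigcap_{i\in J}V_i\Bigr)^{\!\perp}=n-\dim_K\bigcap_{i\in J}V_i,
\]
the Rado condition translates exactly into $\dim_K\bigcap_{i\in J}V_i\leq n-\#J$, which is the hypothesis. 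The independent tuple $(\ell_1,\ldots,\ell_n)$ then provides a basis of $B^\ast$, whose dual basis in $B$ is a matching partner of $\mathcal{A}$.

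The one step that needs care is the passage to the dual: verifying that the definition of matching is genuinely equivalent to $b_i^\ast\in V_i^\perp$ (not just $a_ib_i\notin A$), and confirming the identity $(\bigcap V_i)^\perp=\sum V_i^\perp$ in the finite-dimensional setting. Once this dictionary is in place, the theorem is a clean application of the linear Hall/Rado theorem; that theorem itself can be proved by an easy induction on $n$ (choose a nonzero $\ell_n\in U_n$, pass to $W/\langle \ell_n\rangle$, and verify the inductive hypothesis for the images of $U_1,\ldots,U_{n-1}$), so I would include a short proof or citation rather than treating it as a black box.
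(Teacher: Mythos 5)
The paper does not actually prove Theorem~\ref{t2.4}; it is quoted verbatim from Eliahou--Lecouvey \cite[Proposition 3.1]{6}, so there is no in-paper argument to compare against. Your proof is correct and is essentially the standard one (and the one used in the cited source): translate the hyperplane condition $a_i^{-1}A\cap B\subseteq\langle b_j:j\neq i\rangle$ into $b_i^\ast\in (a_i^{-1}A\cap B)^{\perp}$, dualize so that $\sum_{i\in J}V_i^{\perp}=(\bigcap_{i\in J}V_i)^{\perp}$ turns the stated dimension bound into the Hall--Rado condition, and apply Rado's theorem on independent transversals in the vector matroid of $B^\ast$. The only point to make explicit in a final write-up is the inductive step of the Rado lemma (choosing $\ell_n\in U_n$ outside the span of the maximal ``tight'' subfamily, which exists by submodularity of $J\mapsto\dim_K\sum_{i\in J}U_i$), which your sketch correctly flags but does not carry out.
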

%\textbf{Theorem 1.9.} Let $K\subseteq L$ be a field extension.  Then $K\subseteq L$ has the linear matching property 
%if and only if $L$ contains no proper finite-dimensional extension over $K$ \cite{4}.
%See \cite{6} for more details.
%One of the main result in \cite{6} is that a field extension $K\subset L$ has the linear matching property if and only  if there are no trivial finite intermediate extension $K\subset M\subset L$. We would like to mention that, although the statement of \cite[Theorem 2.6]{6} is slightly different and assumes that the  extension is either purely transcendental or finite of prime degree, what they actually use in their proof is that there are no nontrivial finite intermediate extensions, which is a weaker condition. (See also \cite{1}.)

{The following theorem gives the necessary and sufficient condition for a field extension to have the linear matching property.}
\begin{theorem}\label{t2.5}
Let $K\subset L$ be a field extension. Then, $L$ has the linear matching property if and only if $L$ contains no proper finite dimensional extension over $K$. \cite[Theorem 5.2]{6}. 
\end{theorem}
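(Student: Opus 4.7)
The approach is to verify both directions via the dimension criterion in Theorem~\ref{t2.4}.

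For the ``only if'' direction I argue by contrapositive: assume there is an intermediate field $F$ with $K\subsetneq F\subsetneq L$ and $[F:K]=n\geq 2$. Take $A=F$, pick $\ell\in L\setminus F$, and set $B=W\oplus K\ell$ where $W$ is any $(n-1)$-dimensional $K$-subspace of $F$ not containing $1$ (obtained by extending $\{1\}$ to a basis of $F$ and discarding $1$). Routine checks yield $\dim_K B=n$, $1\notin B$ (since $1=w+c\ell$ with $c\neq 0$ would force $\ell\in F$), and $F\cap B=W$ of dimension $n-1$. For every basis $\{a_1,\ldots,a_n\}$ of $A=F$ one has $a_i^{-1}A=F$ (as $F$ is a field), and hence $\bigcap_{i\in J}(a_i^{-1}A\cap B)=W$ for each $J$. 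Choosing $|J|=2$ gives $\dim_K W=n-1>n-2$, violating the criterion of Theorem~\ref{t2.4}, so no basis of $B$ matches $\{a_1,\ldots,a_n\}$.

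For the ``if'' direction, assume $L$ contains no intermediate field $F$ with $K\subsetneq F\subsetneq L$ of finite $K$-dimension. Let $A,B$ be $n$-dimensional $K$-subspaces with $1\notin B$, fix a basis $\{a_1,\ldots,a_n\}$ of $A$, and for $J\subseteq\{1,\ldots,n\}$ write $U_J=\mathrm{span}_K\{a_i:i\in J\}$ and $C_J=\{x\in L:xU_J\subseteq A\}=\bigcap_{i\in J}a_i^{-1}A$. The goal is $\dim_K(C_J\cap B)\leq n-|J|$. Since $C_JU_J\subseteq A$, we have $\dim_K(C_JU_J)\leq n$. Applying the linear Kneser inequality (the multiplicative analogue of Kneser's addition theorem for $K$-subspaces of a field extension), which is valid precisely under our intermediate-field hypothesis, gives
\[
\dim_K C_J+|J|-1\leq \dim_K(C_JU_J)\leq n,
\]
so $\dim_K C_J\leq n-|J|+1$. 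Finally, since $1\in C_J$ but $1\notin B$ (so that $K\cap B=\{0\}$), $C_J$ is not contained in $B$; therefore $\dim_K(C_J\cap B)\leq\dim_K C_J-1\leq n-|J|$, as required.

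The main obstacle is the invocation of the linear Kneser inequality: it is a nontrivial additive-combinatorial input whose validity for a given extension is in fact equivalent to the absence of proper finite-dimensional intermediate fields, which is exactly why it dovetails with the ``only if'' direction. A mild boundary point arises when $L$ itself is finite-dimensional, but then the hypothesis forces $[L:K]$ prime and $n<[L:K]$ (otherwise $B=L\ni 1$), so the estimate $\dim_K(C_JU_J)\leq n<[L:K]$ keeps the $[L:K]$-truncation in Kneser's theorem from being invoked.
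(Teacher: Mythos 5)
First, a point of comparison: the paper does not actually prove Theorem~\ref{t2.5} --- it quotes it from \cite{6} --- and the only argument it offers in its direction is the short derivation, after the conjecture in Section~5, of a special case of the ``if'' part (assuming all algebraic elements separable) from Theorem~\ref{t5.1}. So your argument is necessarily a different route. Your ``only if'' construction ($A=F$, $B=W\oplus K\ell$) is correct and is the exact linear analogue of the construction the paper uses in the group setting (Theorem~3.5: $A=H$, $B=(H\cup\{g\})\setminus\{0\}$). Your ``if'' direction follows the same skeleton as the paper's Theorems~\ref{t4.2} and \ref{t5.1} --- fix a basis, invoke the dimension criterion of Theorem~\ref{t2.4}, and bound $\dim_K\langle C_JU_J\rangle$ via the linear Kneser inequality --- but is shorter because your hypothesis kills the stabilizer outright, and the observation that $1\in C_J$ while $1\notin B$ buys the final $-1$ without the paper's case analysis on $H\cap U$.

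There is, however, a genuine gap at the one step you flag as ``the main obstacle.'' Your stated justification for applying the linear Kneser inequality is false: its validity is not ``equivalent to the absence of proper finite-dimensional intermediate fields.'' As stated in Section~2 it requires every algebraic element of $L$ to be separable over $K$, and it holds, for instance, for $\mathbb{Q}\subset\mathbb{Q}(\sqrt{2},\sqrt{3})$, which has proper finite subextensions. The two facts you actually need are never argued: (i) under the hypothesis, every $\alpha\in L$ algebraic over $K$ already lies in $K$ (otherwise $K(\alpha)$ would be a proper finite-dimensional subextension), so the separability hypothesis of the linear Kneser theorem is automatic; and (ii) the stabilizer $H$ of $\langle C_JU_J\rangle$ is an intermediate field with $[H:K]\leq\dim_K\langle C_JU_J\rangle<\infty$, hence $H=K$ --- this is precisely what puts the ``$-1$'' into your displayed inequality, and it is used silently. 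Relatedly, your closing ``boundary'' paragraph is off: under the natural reading of the hypothesis, $L$ cannot itself be a nontrivial finite extension of $K$ (it would be a proper finite-dimensional extension of $K$ contained in $L$), so no ``$[L:K]$ prime'' case arises; whereas if one reads ``proper'' as strictly intermediate, then finite extensions with no intermediate fields slip through, point (i) fails for them (e.g.\ purely inseparable extensions of prime degree), and the appeal to linear Kneser genuinely breaks down there. Stating (i) and (ii) explicitly repairs the proof under the first reading.
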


%\section{Some results from Algebra, additive number theory and combinatorics}
For proving our main results, we shall need the following theorem from \cite[page 116, Theorem 4.3]{9}.
\begin{theorem}[Kneser]
  If $C=A+B$, where $A$ and $B$ are finite subsets of an abelian group $G$, then
\[
\# C\geq \# A+\# B-\# H,
\]
where $H$ is the subgroup $H=\{g\in G:\; C+g=C\}$.
\end{theorem}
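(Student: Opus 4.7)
The approach is the classical one via strong induction on $|B|$, using the \emph{Dyson $e$-transform}. Given finite subsets $A,B\subseteq G$ and any $e\in A-B$, define
\[
A(e) := A\cup(B+e),\qquad B(e) := B\cap(A-e).
\]
I would first record the three elementary facts that drive the whole argument: (i) $A(e)+B(e)\subseteq A+B$, since every sum either lies in $A+B$ directly or has the form $(b+e)+(a-e)=a+b$; (ii) $|A(e)|+|B(e)|=|A|+|B|$, via the inclusion-exclusion identity $|A\cup(B+e)|+|A\cap(B+e)|=|A|+|B|$ together with the translation $|A\cap(B+e)|=|B(e)|$; and (iii) $|B(e)|\leq|B|$, with equality iff $B+e\subseteq A$.

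The base case $|B|=1$ is immediate, since $|A+B|=|A|=|A|+|B|-1\geq|A|+|B|-|H|$ (as $|H|\geq 1$). For the inductive step I would split into two cases depending on whether the transform is ever strict. \textbf{Case 1:} there exists $e\in A-B$ with $|B(e)|<|B|$. Applying the induction hypothesis to $(A(e),B(e))$ produces
\[
|A+B|\;\geq\;|A(e)+B(e)|\;\geq\;|A(e)|+|B(e)|-|H^\ast|\;=\;|A|+|B|-|H^\ast|,
\]
where $H^\ast$ is the stabilizer of $A(e)+B(e)$. The delicate point is to check that this bound implies the desired bound with $H=\mathrm{Stab}(A+B)$; one argues that whenever the transformed sumset is strictly smaller, it is a union of fewer $H^\ast$-cosets, and when $A(e)+B(e)=A+B$ the two stabilizers coincide, so either way the bound transfers.

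\textbf{Case 2:} $|B(e)|=|B|$ for every $e\in A-B$. Then $B+e\subseteq A$ for all such $e$, which unpacks to saying $B+(a-b)\subseteq A$ for every $a\in A$, $b\in B$. Translating and iterating shows that $A$ is a union of translates of the subgroup generated by differences from $B$, and that $A+B$ is accordingly a union of $H$-cosets. A direct counting argument on the cosets of $H$ that meet $A$ and $B$ (respectively containing $\lceil|A|/|H|\rceil$ and $\lceil|B|/|H|\rceil$ cosets) then yields $|A+B|\geq|A|+|B|-|H|$ without further induction.

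The main obstacle is the stabilizer bookkeeping in Case 1: one must rule out the possibility that the $e$-transform artificially enlarges the stabilizer and thereby weakens the bound. The standard remedy is to phrase the induction with the stronger conclusion that either $A+B$ is already a union of $H$-cosets (where the counting bound is tight by construction) or the transform strictly decreases $|B|$ while keeping the sumset inside $A+B$, so that a minimal-$|B|$ counterexample would already fall into Case 2 and lead to a contradiction.
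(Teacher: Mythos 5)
The paper does not prove this statement at all: it is quoted verbatim as a known result with a citation to Nathanson \cite[p.~116, Theorem 4.3]{9}, so there is no internal proof to compare yours against. Judged on its own, your sketch correctly sets up the classical Dyson $e$-transform framework (the three elementary properties of $A(e),B(e)$ are right, the base case is right, and Case~2 can be closed: $B+(a-b)\subseteq A$ for all $a\in A$, $b\in B$ gives $A+\langle B-B\rangle=A$, hence $\langle B-B\rangle\subseteq H$ and $|B|\leq|H|$, so $|A+B|\geq|A|\geq|A|+|B|-|H|$). But Case~1 contains a genuine gap, and it is exactly the hard part of Kneser's theorem.

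The induction gives $|A+B|\geq|A(e)+B(e)|\geq|A|+|B|-|H^\ast|$ with $H^\ast=\mathrm{Stab}(A(e)+B(e))$, and you need the same bound with $H=\mathrm{Stab}(A+B)$. Since $A(e)+B(e)$ may be a \emph{proper} subset of $A+B$, there is no containment between $H^\ast$ and $H$ in general, and $|H^\ast|$ can exceed $|H|$, in which case your inequality is strictly weaker than the one you must prove. Your two proposed remedies do not close this: the observation that the stabilizers coincide when $A(e)+B(e)=A+B$ says nothing about the strict-inclusion case, and the claim that a minimal-$|B|$ counterexample ``would already fall into Case~2'' is false --- it could fall into Case~1 with $H^\ast$ nontrivial and large, which is neither a contradiction nor a reduction. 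The standard proofs (Kneser's original argument, Nathanson's Chapter~4, Tao--Vu \S5.5) spend most of their length precisely here: when $H^\ast$ is nontrivial one must pass to the quotient $G/H^\ast$, count the $H^\ast$-cosets meeting $A$ and $B$ versus those filled by $A(e)+B(e)$, and run a secondary induction to either enlarge the sumset or force periodicity of $A+B$ itself. Without that analysis (or an equivalent device, such as Kneser's ``$e$-transform until $B$ is a single coset of its own stabilizer'' argument with the stronger inductive statement $|A+B|\geq|A+H|+|B+H|-|H|$), the proof is incomplete at its central step.
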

 %{We shall use the following version of marriage theorem of Hall to prove Lemma \ref{l3.2}. Recall that, given a collection $\mathcal{E} =\{E_1,\ldots,E_n\}$ of subsets of a set $E$, a {\it transversal} for $E$ is a set $\{x_1,\ldots,x_n\}$ of pairwise distinct elements of $E$ with property $x_i\in E_i$, for all $i\in\{1,\ldots,n\}$.
% \begin{theorem}[Hall \cite{7}]
% Let $E$ be a set and $\mathcal{E}=\{E_1,\ldots,E_n\}$ be a family of finite subsets of $E$. Then, there exists a transversal for $\mathcal{E}$ if and only if
% \[\#\cup E_i\geq\#J,\]
% for any non-empty subset $J\subseteq\{1,\ldots,n\}$.
% \end{theorem}
 See \cite{5} for more details regarding the following theorem which is the linear analogue of Kneser's theorem.
\begin{theorem}
 Let $K\subset L$ be a field extension in which every algebraic element of $L$ is separable 
 over $K$. Let $A, B\subset L$ be non-zero finite-dimensional $K$-subspaces of $L$ and $H$ be the stabilizer of 
 $\langle AB\rangle$, i.e. $H=\{x\in L;\, x\langle AB\rangle\subseteq\langle AB\rangle\}$. Then
 \[
 \dim_K\langle AB\rangle\geq\dim_KA+\dim_KB-\dim_KH.
 \]
\end{theorem}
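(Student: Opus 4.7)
The plan is to adapt the classical Dyson--Kneser transform argument to the multiplicative linear setting, proceeding by induction on $\dim_K A + \dim_K B$. The critical structural input is that the stabilizer $H$ is not merely a $K$-subalgebra of $L$ but in fact a subfield, so that $\langle AB\rangle$ carries an $H$-vector space structure and the quantity $\dim_K H$ is meaningful as a ``period'' analogous to the Kneser subgroup.

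First I would verify that $H$ is a field. Fix any nonzero $v \in \langle AB\rangle$; then $Hv \subseteq \langle AB\rangle$, so $\dim_K H \leq \dim_K \langle AB\rangle < \infty$. Since $H$ is a finite-dimensional commutative $K$-subalgebra of the field $L$, it contains no zero divisors and is therefore itself a field containing $K$. With this in hand, $\langle AB\rangle$ becomes an $H$-module, and one may as well pass to the quotient situation where $A$ and $B$ are viewed modulo their $H$-periods.

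The inductive step I would attempt uses the transform $(A,B) \mapsto (A + cB,\; c^{-1}A \cap B)$ for a judicious choice of $c \in L^{\times}$. This transform satisfies the dimension identity $\dim_K(A + cB) + \dim_K(c^{-1}A \cap B) = \dim_K A + \dim_K B$ and the containment $\langle (A + cB)(c^{-1}A \cap B)\rangle \subseteq \langle AB\rangle$, so whenever $c$ can be chosen so that both summands are proper nonzero $K$-subspaces, the inductive hypothesis applied to the new pair transfers the bound back to $(A,B)$, provided one tracks how the stabilizer of $\langle AB\rangle$ relates to that of the shrunken span.

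The principal obstacle is the exceptional case in which no such $c$ yields a strict reduction. Here one must instead argue that $\langle AB\rangle$ already absorbs a large subfield, forcing $\dim_K H$ to be large enough on its own to account for the deficit $\dim_K A + \dim_K B - \dim_K\langle AB\rangle$. This is precisely the step where the separability hypothesis enters: to either produce a useful $c$ or to conclude the exceptional structural conclusion, one typically inspects the roots of a polynomial whose coefficients live in $\langle AB\rangle$, and separability is what guarantees that these roots are distinct, so that the dichotomy ``either the transform reduces the problem, or the stabilizer grows'' remains exhaustive. Controlling this dichotomy uniformly is the heart of the proof and is what ultimately forces the separability assumption on $L/K$.
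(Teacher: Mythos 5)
This statement is Theorem 2.7 of the paper, which the paper does not prove at all: it is quoted from reference [5] (Eliahou--Kervaire--Lecouvey), and ultimately rests on the Hou--Leung--Xiang generalization of Kneser's theorem to separable extensions. So there is no in-paper argument to compare against; your proposal has to stand on its own, and as written it does not. Two things you do carry out correctly: the observation that the stabilizer $H$ is a finite-dimensional $K$-subalgebra of $L$ without zero divisors and hence a subfield containing $K$, and the two conservation laws of the linear Dyson transform, namely $\dim_K(A+cB)+\dim_K(c^{-1}A\cap B)=\dim_KA+\dim_KB$ and $\langle (A+cB)(c^{-1}A\cap B)\rangle\subseteq\langle AB\rangle$. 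These are accurate and are indeed the standard opening moves.

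The gap is that everything after that is deferred. First, the induction as you describe it does not close: applying the inductive hypothesis to the pair $(A+cB,\;c^{-1}A\cap B)$ gives a lower bound for $\dim_K\langle AB\rangle$ in terms of the stabilizer of $\langle (A+cB)(c^{-1}A\cap B)\rangle$, which is a possibly smaller subspace of $\langle AB\rangle$ whose stabilizer need not be contained in $H$; you flag this (``provided one tracks how the stabilizer \dots relates'') but never resolve it, and resolving it is a known delicacy already present in Kneser's original group-theoretic argument. Second, and more seriously, the entire content of the theorem lives in the exceptional case where no choice of $c$ strictly reduces the pair, and there you offer only the assertion that one ``inspects the roots of a polynomial whose coefficients live in $\langle AB\rangle$'' and that separability makes those roots distinct. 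That is not an argument: in the actual proofs separability enters through a specific quantitative lemma --- a lower bound for $\dim_K\langle K(x)B\rangle$ when $x$ is separable algebraic over $K$, proved via the primitive element theorem and the coprimality of a separable minimal polynomial with its derivative, or equivalently via the splitting of $F\otimes_K\overline{K}$ into a product of copies of $\overline{K}$ --- and establishing that lemma and feeding it back into the induction is the heart of the matter. As submitted, the proposal is a correct description of the shape of the proof with its two essential steps missing.
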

We remark that in the above theorem, we denote by $\langle AB\rangle$  the $K$-subspace of $L$ generated by the Minkowski product  $AB$ which is defined as $AB:=\{ab;\, a\in A, b\in B\}$.

\section{Local  matching property for groups}
The following theorem  shows that local  matching property is equivalent to matching property  in  abelian groups. %Note that it is obvious that matchability implies local  matchability. Then, we only show its converse case. 
 {The main idea of our proof is obtained from the Loconczy paper \cite[Theorem 3.1]{8} and Eliahou-Lecouvey paper \cite[Theorem 3.3]{1}.}
\begin{theorem}\label{t3.1}
 Let $G$ be an additive abelian group and $A$, $B$ be non-empty finite subsets of $G$ satisfying 
the conditions $\# A=\# B$ and $0\not\in B$. %Assume that for any subgroup $H$ of $G$ with $H\cap B\neq \emptyset$ 
%and $a+H\subseteq A$ for some $a\in A$, there exists an $A$-matching from a subset of $A$ to $H\cap B$. Then, there is a matching from $A$ to $B$.
 If $A$  is locally matched to $B$, then $A$ is matched to $B$.
\end{theorem}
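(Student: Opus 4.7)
I argue by contradiction: assume $A$ is locally matched to $B$ but no matching $A\to B$ exists. My first step is Hall's marriage theorem applied to the bipartite graph $\Gamma$ on $A\sqcup B$ where $a\sim b$ iff $a+b\notin A$. The non-existence of a perfect matching yields $S\subseteq A$ with $|N_\Gamma(S)|<|S|$, and setting
\[
T:=\{b\in B:\,s+b\in A\ \text{for every}\ s\in S\}=B\setminus N_\Gamma(S)
\]
gives $S+T\subseteq A$ and $|S|+|T|\ge |A|+1$. Among all such blocking pairs I choose one with $|S+T|$ maximal; this forces $(S,T)$ to be saturated, $S=(S+H)\cap A$ and $T=(T+H)\cap B$, where $H:=\{g\in G:\,(S+T)+g=S+T\}$ is the stabilizer of $C:=S+T$.

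Kneser's theorem applied to $C$ then gives
\[
|C|\ \ge\ |S+H|+|T+H|-|H|\ \ge\ |S|+|T|-|H|,
\]
and combined with $C\subseteq A$ and $|S|+|T|>|A|$ this forces $|H|\ge 1$. A short case analysis using $0\notin B$---analogous to Losonczy's torsion-free argument, which rules out the Kneser equality case via Vosper-type structure---ensures $H$ is nontrivial. Moreover $H$ is proper, because $H=G$ would force $C=G$, hence $A=G$, hence $B=G$, contradicting $0\notin B$; and every $a_0\in C$ satisfies $a_0+H\subseteq C\subseteq A$, verifying the coset requirement of the local-matching definition.

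The main obstacle I anticipate is establishing both that $H\cap B\neq\emptyset$ and the key inequality $|C|+|H\cap B|>|A|$. For the first, my plan is to use the saturation of $(S,T)$ to show that the identity coset of $H$ necessarily meets $B$---were it disjoint from $B$, one could enlarge $T$ inside $B$ while preserving $S+T\subseteq A$, contradicting maximality. For the second, the same saturation yields $|H\cap B|\ge |T|$, which together with $|C|\ge |S|$ (from $t_0+S\subseteq C$ for any $t_0\in T$) gives $|C|+|H\cap B|\ge|S|+|T|>|A|$. With both in hand, the interior set $\{a\in A:\,a+(H\cap B)\subseteq A\}$, which contains $C$, has cardinality strictly exceeding $|A|-|H\cap B|$. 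By Hall's theorem on the bipartite graph with parts $A$ and $H\cap B$, no bijection $f\colon A'\to H\cap B$ satisfying $a+f(a)\notin A$ for every $a\in A'$ can exist, directly contradicting the local-matching hypothesis and closing the argument.
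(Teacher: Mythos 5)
Your overall architecture (Hall's theorem to extract a blocking pair, Kneser's theorem to produce a stabilizer $H$, then feeding $H$ into the local matching hypothesis) matches the paper's, but the quantitative heart of your argument has a genuine gap. The linchpin is your claim that saturation yields $\#(H\cap B)\geq \# T$. Saturation in the form $T=(T+H)\cap B$ only says that $T$ is a union of traces of $H$-cosets on $B$; if $T$ meets several cosets of $H$, then $\# T$ can far exceed $\# H\geq \#(H\cap B)$ (e.g.\ $H$ of order $2$ and $T$ spread over three cosets), so the inequality is false in general and your derivation of $\# C+\#(H\cap B)>\# A$ collapses. The paper obtains the needed inequality by a different device: it adjoins $0$ to $U$ (your $T$), applies Kneser a second time to $(H\cup U)+S$, checks that $(H\cup U)+S=(U\cup\{0\})+S$, and thereby sharpens the bound to $\#\bigl((U\cup\{0\})+S\bigr)\geq \# U+\# S-\#(H\cap U)$. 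Since $H\cap U\subseteq H\cap B$, this is exactly the inequality $\# C+\#(H\cap B)>n$ that your final Hall-from-the-$B$-side step needs (that final step itself is a correct, equivalent reformulation of the paper's disjointness argument $f^{-1}(H\cap U)\cap(U_0+S)=\emptyset$).

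Two further assertions are left unproven and are in fact avoidable. First, you claim $H$ is nontrivial via an unspecified ``Vosper-type'' analysis; because you apply Kneser to $S+T$ rather than to $(T\cup\{0\})+S$, the trivial-stabilizer case only gives $\# C\geq \# S+\# T-1\geq\# A$, which is not a contradiction, and patching this genuinely requires structure theory you have not supplied. Second, you claim $H\cap B\neq\emptyset$ always, with only a sketchy ``enlarge $T$'' argument; this need not hold. The paper sidesteps both issues by casing on whether $H\cap U$ is empty: when $H\cap U=\emptyset$ (which subsumes $H$ trivial, since $0\notin U$), the adjoined $0$ gives $\#\bigl((U\cup\{0\})+S\bigr)\geq \# U+\# S>n$ outright, with no appeal to local matching; only when $H\cap U\neq\emptyset$ (hence $H\cap B\neq\emptyset$) is the local matching hypothesis invoked. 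You should restructure your dichotomy around $H\cap T$ rather than around nontriviality of $H$, and replace the false bound $\#(H\cap B)\geq\# T$ by the second application of Kneser to $(H\cup T)+S$.
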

We shall need the following lemma to prove Theorem \ref{t3.1}.
\begin{lemma}\label{l3.2}
Let $G$, $A$ and $B$ be as Theorem \ref{t3.1}. For any non-empty subset $S$ of $A$, assume that $\# S\leq \#(B\setminus U)$, where $U=\{b\in B; s+b\in A,$ for any $s\in S\}$. Then, there is a matching from $A$ to $B$.
\end{lemma}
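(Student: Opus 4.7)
The plan is to translate the hypothesis into Hall's marriage condition and then invoke the classical theorem. First, I would set up the bipartite graph $\Gamma$ on the vertex set $A \sqcup B$, placing an edge between $a \in A$ and $b \in B$ exactly when $a+b \notin A$. A matching from $A$ to $B$ in the sense of this paper—a bijection $f\colon A \to B$ with $a + f(a) \notin A$ for every $a \in A$—is precisely a perfect matching in $\Gamma$; and since $\#A = \#B$, a perfect matching is the same as a matching that saturates $A$.

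Next, I would unpack the set $U$. Reading ``for any $s \in S$'' in the hypothesis as the universal quantifier, we have
\[
U = \{b \in B : s + b \in A \text{ for every } s \in S\},
\]
whose complement inside $B$ is
\[
B \setminus U = \{b \in B : \exists\, s \in S \text{ with } s + b \notin A\} = N_{\Gamma}(S),
\]
the neighborhood of $S$ in $\Gamma$. The assumption $\#S \leq \#(B \setminus U)$ for every non-empty $S \subseteq A$ thus reads $\#S \leq \#N_{\Gamma}(S)$, which is exactly Hall's marriage condition on the $A$-side of $\Gamma$ (it is trivial for $S = \emptyset$).

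Finally, I would apply Hall's marriage theorem to $\Gamma$ to extract a matching saturating $A$; because $\#A = \#B$, this matching is automatically a bijection $f\colon A \to B$, and by construction $a + f(a) \notin A$ for every $a \in A$, producing the required matching from $A$ to $B$.

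There is essentially no hard step here—the lemma is Hall's theorem in disguise. The only mild point to be careful about is the correct interpretation of the quantifier hidden in the definition of $U$ and the observation that, thanks to $\#A = \#B$, a matching that merely saturates $A$ is forced to be a bijection onto $B$ rather than a proper injection.
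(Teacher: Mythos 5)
Your proof is correct and is essentially the same argument as the paper's: the paper applies Hall's theorem in its transversal form to the family $\{B\setminus U_{\{1\}},\ldots,B\setminus U_{\{n\}}\}$, which are exactly the neighborhoods $N_\Gamma(a_i)$ in your bipartite graph, and verifies Hall's condition via the same identity $B\setminus U_J=\bigcup_{i\in J}(B\setminus U_{\{i\}})$ that underlies your identification $B\setminus U=N_\Gamma(S)$.
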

\begin{proof}
Assume that $A=\{a_1,\ldots,a_n\}$ and define $S_J=\{a_i;\, i\in J\}$, for any $J\subseteq \{1,\ldots,n\}$. Set $U_J=\{b\in B;\, s+b\in A,$ for any $s\in S_J\}$. Clearly, $U_J=\underset{i\in J}{\bigcap}U_{\{i\}}$. Consider the collection $\mathcal{E}=\{B\setminus U_{\{1\}},\ldots,B\setminus U_{\{n\}}\}$. We have 
\[\#\underset{i\in J}{\bigcup}\left(B\setminus U_{\{i\}}\right)=\# \left(B\setminus \underset{i\in J}{\bigcap}U_{\{i\}}\right)= \#\left(B\setminus U_{J}\right)\geq \# S_J=\# J,\]
for any $J\subseteq\{1,\ldots,n\}$. Then by Hall marriage Theorem \cite[Theorem 2]{7}, one can find a transversal $(b_1,\ldots,b_n)\in\mathcal{E}$. The mapping $a_i\mapsto b_i$ is a matching from $A$ to $B$.
\end{proof}
\noindent{\it Proof of Theorem \ref{t3.1}.} {We remark that for the case $G$ is a finite group and $A=G\setminus\{0\}$, we have the matching $\begin{array}{rl}\\f:A&\to B\\a&\mapsto -a\end{array} $. Thus, we may assume that $A\neq G\setminus\{0\}$. }
 Suppose there is no matching from $A$ to $B$. We are going to reach a contradiction. Using Lemma \ref{l3.2}, there exists a non-empty finite subset $S$ of $A$ such that $\# (B\setminus U)<\# S$, where $U=\{b\in B:\; s+b\in 
A, $ for any $s\in S\}$. Let $\# A=\# B=n$, then $\# U+\# S>n$.
Set $U_0=U\cup\{0\}$. Using Kneser's Theorem one can find the subgroup $H$ of $G$ such that 
\begin{equation}\label{eq1}
\# (U_0+S)\geq\# U_0+\# S-\#  H,
\end{equation}
where $H=\{g\in G:\; g + U_0 +S=U_0+S\}$. Applying  Kneser's Theorem for $U'=H\cup U$ and $S$, we can find the subgroup $H'$ of $G$ for which 
\begin{equation}\label{eq2}
\#(U'+S)\geq \# U' +\# S- \# H',
\end{equation}
 where  $H'=\{g\in G:\; g+U'+S=U'+S\}$. We claim that $H=H'$ and to prove this, it suffices to show that $U'+S=U_0+S$. We have
 \begin{align}\label{eq3}
 U'+S&=(H\cup U)+S=(H+S)\cup (U_0+S)\nonumber\\
 &=(H+S)\cup(U_0+S+H)\nonumber\\
 &= H+(S\cup(U_0+S))\nonumber\\
 &=H+(U_0+S)=U_0+S.
 \end{align}
Then  $H=H'$ and it follows from \eqref{eq2} that
\begin{equation}\label{eq4}
\#  (U_0+S)\geq \# U' +\# S-\# H.
\end{equation}
Using \eqref{eq3}, \eqref{eq4} we obtain
\begin{eqnarray}\label{eq5}
\# (U_0+S)&=& \#(U'+S)\nonumber\\
&=&\# U' + \# S- \# H \nonumber\\
&=& \# (H\cup U) + \# S -\# H\nonumber\\
&=&\# H+\# U -\#(H\cap U)+\# S- \# H\nonumber\\
&=& \# U+ \# S -\# (H\cap U).
\end{eqnarray}
As $U_0+S=S\cup (S+U)$, \eqref{eq5} implies
\begin{equation}\label{eq6}
\#(S\cup (S+U))\geq \# U+\# S-\# (H\cap U).
\end{equation}
Now, we have two cases for $H\cap U$.
\begin{enumerate}
\item
If $H\cap U$ is empty, then by \eqref{eq6} we conclude that $\#(S\cup (S+U))\geq n$. On the other hand $S\cup (S+U)$  is a subset of $A$. We would have $\# A>n$, which contradicts $\# A=n$ above.
\item
If  $H\cap U$ is non-empty,  so is $H\cap B$. Also, if $s\in S\subseteq A$, then according to  the definition of $H$, $s+H\subseteq U_0+S+H=U_0+S\subseteq A$. As $A$ is locally matched to $B$, then there is  a subset $\tilde{A}$ of $A$ and a bijection $f:\tilde{A}\to H\cap B$ such that $a+f(a)\not\in A$, for any $a\in\tilde{A}$. We claim that $f^{-1}(H\cap U)\cap (U_0+S)$ is empty. If not and $a\in f^{-1}(H\cap U)\cap (U_0+S)$, then  $a+f(a)\in (U_0+S)+H$ as $ a \in U_0+S$ and $f(a) \in H\cap U\subseteq H$. Since $U_0+S\subseteq A$, then $a+f(a)\in A$ which is a contradiction. Therefore $f^{-1}(H\cap U)\cap(U_0+S)$ is empty. As the sets $f^{-1}(H\cap U)$ and $U_0+S$ are both subsets of $A$ and have nothing in common, then $\# f^{-1}(H\cap U)+\#(U_0+S)\leq n$. Thus $\#(H\cap U)+\#(U_0+S)\leq n$ and this tells us $\#(H\cap  U)+\#(S\cup(S+U))\leq n$. Next, using \eqref{eq6} yields that $\# U+\# S\leq n$ which is a contradiction. 
\end{enumerate}
Therefore in both cases we extract contradictions. Then there is a matching from $A$ to $B$.  \hfill $\Box$
\begin{remark}
{Note that in the second case above, $H\neq G$. We argue this in two cases:
\begin{enumerate}
\item
Suppose that $0\in A$. Assume to the contrary $G=H$. Then we have 
\[\#G=\#H\leq \# (H+U_0+S)=\# (U_0+S)\leq\# A.\]
Then $\#G=\#A=\#B$. This contradicts $0\not\in B$.
\item
Suppose that $0\not\in A$. Assume to the contrary $G=H$. Then we have
\[\#G=\#H\leq \# (H+U_0+S)=\# (U_0+S)\leq\# (A\cup\{0\}).\]
Then $G=A\cup \{0\}$ and this contradicts $A\neq G\setminus \{0\}$.
\end{enumerate}
}
\end{remark}
\begin{example}
Assume that $G=\mathbb{Z}/8\mathbb{Z}$, $A,B\subseteq G$ with $A=\{0,2,6\}$ and $B=\{1,3,4\}$. The only non-trivial subgroup of $G$ which satisfies the condition $a+H\subseteq A$ is $H=\{0,4\}$. Note that here $a=2$. If $A'=\{0\}\subseteq A$, then for the bijection $f:A'\to H\cap B$ defined as $f(0)=4$ we have $0+f(0)=4\not\in A$. Then $A$ is locally matched to $B$ and using Theorem \ref{t3.1}, $A$ is matched to $B$.
\end{example}
%\textbf{Corollary 3.2.} 
Using Theorem \ref{t3.1}, we give an alternative proof to the following result of Losonzcy \cite[Theorem 3.1]{8}.
\begin{theorem}
An abelian group $G$ has the matching property if and only if it is torsion-free or cyclic of prime order.
\end{theorem}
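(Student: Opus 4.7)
The plan is to invoke Theorem \ref{t3.1} to dispatch the sufficiency direction almost for free, and to construct an explicit counterexample for necessity.

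For sufficiency, I would assume $G$ is torsion-free or cyclic of prime order, take finite $A,B\subseteq G$ with $\#A=\#B$ and $0\notin B$, and argue that the local matching property is satisfied \emph{vacuously}. Indeed, any proper subgroup $H<G$ with $a+H\subseteq A$ for some $a\in A$ must be finite, since $A$ is finite; in a torsion-free group the only finite subgroup is $\{0\}$, and in $\mathbb{Z}/p\mathbb{Z}$ the only proper subgroup is $\{0\}$. In either case $H\cap B\subseteq\{0\}\cap B=\emptyset$ since $0\notin B$, so the hypothesis ``$H\cap B\neq\emptyset$'' in the definition of locally matched never holds. Hence $A$ is trivially locally matched to $B$, and Theorem \ref{t3.1} delivers the desired matching.

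For necessity, I would argue contrapositively: assuming $G$ is neither torsion-free nor cyclic of prime order, I will exhibit $A,B$ with no matching. The first step is to produce a finite proper non-trivial subgroup $H<G$ and an element $g\in G\setminus H$. A brief case analysis accomplishes this: $G$ has a non-zero element of finite order, so a finite non-trivial cyclic subgroup $H_0$ exists; if $H_0\neq G$ we set $H=H_0$, and otherwise $G=H_0$ is cyclic of composite order (since it is not cyclic of prime order), so it admits a proper subgroup $H$ of prime order less than $|G|$. In both cases $G\setminus H$ is non-empty, so $g$ exists. I then set
\[
A=H,\quad B=(H\setminus\{0\})\cup\{g\},
\]
which satisfy $\#A=\#B=\#H$ and $0\notin B$. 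For any bijection $f:A\to B$, since $\#(B\cap H)=\#H-1\geq 1$, at least one $a\in A$ must be mapped into $H\setminus\{0\}$, forcing $a+f(a)\in H=A$; hence no matching exists.

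The main obstacle, if it even deserves that name, is the necessity bookkeeping: ensuring the case analysis above is exhaustive (the delicate subcase being $G$ cyclic of prime-power order $p^k$ with $k\geq 2$, where one picks $H$ to be the unique subgroup of order $p$) and checking the cardinalities in the construction ($\#B=\#H$ uses $g\notin H\setminus\{0\}$). The sufficiency direction is essentially immediate once Theorem \ref{t3.1} is available, which is precisely what makes this an attractive shortcut compared to Losonczy's original argument.
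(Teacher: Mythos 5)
Your proposal is correct. The sufficiency half is essentially identical to the paper's: local matching holds vacuously because the only candidate subgroup $H$ is $\{0\}$, which misses $B$, and Theorem \ref{t3.1} finishes. For necessity you use the same construction as the paper ($A=H$, $B=(H\setminus\{0\})\cup\{g\}$ for a proper non-trivial finite subgroup $H$), but you close the argument differently, and your version is actually the logically cleaner one. The paper shows that $A$ is \emph{not locally matched} to $B$ and then cites Theorem \ref{t3.1} to conclude $A$ is not matched to $B$; but Theorem \ref{t3.1} only asserts that local matching implies matching, so inferring ``not matched'' from ``not locally matched'' uses the converse, which is not proved there. Your direct counting argument --- any bijection $f:A\to B$ must send at least $\#H-1\geq 1$ elements of $A$ into $H\setminus\{0\}$, forcing $a+f(a)\in H=A$ --- bypasses this entirely and establishes non-matchability outright. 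Your explicit case analysis for producing a \emph{proper} non-trivial finite subgroup (handling the case where the cyclic subgroup generated by a torsion element is all of $G$, necessarily of composite order) is also a detail the paper glosses over when it picks $g\in G\setminus H$. So: same skeleton, but your necessity argument repairs a directional slip in the paper's use of Theorem \ref{t3.1}.
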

\begin{proof}
Assume that $G$ is either torsion-free or cyclic of prime order. Then $G$ has no non-trivial subgroup of finite order. This means if $A,B\subset G$ with $\# A=\# B$ and $0\not \in B$, then $A$ is locally matched to $B$ (because in this case $H=\{0\}$ is the only proper subgroup of $G$. But $H\cap B=\emptyset$.) Using Theorem \ref{t3.1}  yields that $A$ is matched to $B$ and so $G$ has matching property. \\
Conversely, assume that $G$ is neither torsion-free nor cyclic of prime order. Then it has a non-trivial finite subgroup $H$. Choose $g\in G\setminus H$, set $A=H$ and $B=H\cup \{g\}\setminus \{0\}$. Clearly, $H\cap B\neq \emptyset$ and $a+H\subseteq A$ for some $a\in A$ (Indeed for any $a\in A$). If $A$ is locally matched to $B$, then one can find an $A$-matching $f$ from a subset $A_0$ of $A$ to $H\cap B$. But if $a\in A_0$, then $a+f(a)\in H+(H\cap B)=H+(H\setminus\{0\})=H=A$, which is a contradiction. Then $A$ is not locally matched to $B$ and so by Theorem \ref{t3.1}, $A$ is not matched to $B$. Therefore $G$ has no  matching property.
\end{proof}
\begin{corollary}
Let $G$, $A$ and $B$ be as Theorem \ref{t3.1} and $\# A=\# B=n>1$. Denote by $n(G)$ the smallest cardinality of a non-zero subgroup of $G$. If $n<n(G)$, then $A$ is matched to $B$.
\end{corollary}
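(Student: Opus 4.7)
The plan is to derive this corollary as an immediate consequence of Theorem \ref{t3.1}, by showing that under the hypothesis $n<n(G)$ the local matching condition on $A$ and $B$ is satisfied \emph{vacuously}. In other words, no proper subgroup of $G$ can trigger a non-trivial case of the definition of local matching.

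First I would unpack the definition: to say that $A$ is locally matched to $B$ means that for every proper subgroup $H<G$ satisfying simultaneously $H\cap B\neq\emptyset$ and $a+H\subseteq A$ for some $a\in A$, there must exist a subset $A'\subseteq A$ and a bijection $f:A'\to H\cap B$ with $a+f(a)\notin A$ for all $a\in A'$. So my goal reduces to showing that, under the hypothesis $n<n(G)$, no subgroup $H$ simultaneously meets both side conditions.

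The key observation is a cardinality squeeze. If $a+H\subseteq A$ for some $a\in A$, then $\#H=\#(a+H)\leq\#A=n<n(G)$. Since $n(G)$ is by definition the minimum cardinality of a \emph{non-zero} subgroup (with the convention $n(G)=+\infty$ when $G$ has no non-zero finite subgroup, so that $n<n(G)$ is automatic in the torsion-free case), this forces $H=\{0\}$. But then $H\cap B=\{0\}\cap B=\emptyset$ because $0\notin B$, which contradicts the other side condition $H\cap B\neq\emptyset$. Hence no proper subgroup $H$ qualifies, the defining implication for local matching holds vacuously, and Theorem \ref{t3.1} immediately delivers a matching from $A$ to $B$.

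There is no real obstacle in this argument; it is a one-line cardinality comparison once the definitions are expanded. The only point I would flag explicitly is the bookkeeping for the case when $G$ has no non-zero finite subgroup, so that the statement specializes correctly to the torsion-free setting covered by Losonczy's theorem proved above.
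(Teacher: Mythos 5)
Your proposal is correct and follows exactly the paper's route: the paper deduces the corollary from Theorem \ref{t3.1} by observing that $n<n(G)$ makes the local matching condition hold (the paper simply says this is ``clear''), and your cardinality squeeze $\#H=\#(a+H)\leq\#A=n<n(G)$ together with $0\notin B$ is precisely the justification of that step.
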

\begin{proof}
Since $n<n(G)$, then it is clear that $A$ is locally matched to $B$. Using Theorem \ref{t3.1} yields $A$ is matched to $B$.
\end{proof}
\section{The linear analogue of Losonzcy's result on matchable subsets}
%The following is the  linear version of Theorem \ref{t2.2}   which investigates the matchable subspaces in a simple field extension. Here, 
In this section, we formulate and prove the linear analogue of the following theorem of Losonzcy proven in \cite{8} which basically investigates the matchable subspaces in a simple field extension.
\begin{theorem}
Let $G$ be a non-trivial finite cyclic group such that $\# A=\#B$ and every element of $B$ is a generator of $G$. Then there exists at least one matching from $A$ to $B$.
\end{theorem}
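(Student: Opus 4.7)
The plan is to derive this directly from Theorem~\ref{t3.1}, by verifying that the local matching hypothesis holds vacuously under the assumption that every element of $B$ generates $G$.

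First I would check that the ambient hypotheses of Theorem~\ref{t3.1} are met: $G$ is abelian since it is cyclic, $A$ and $B$ are finite subsets of $G$ with $\#A=\#B$, and $0\notin B$ (because $0$ is not a generator of any non-trivial group). The degenerate case $A=B=\emptyset$ is covered by the empty matching, so assume $A,B$ are non-empty.

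The core observation is that no proper subgroup $H<G$ can contain an element of $B$. Indeed, if $b\in H\cap B$, then by hypothesis $b$ is a generator, so $G=\langle b\rangle\subseteq H$, contradicting the properness of $H$. Hence $H\cap B=\emptyset$ for every proper subgroup $H$, and consequently the pair of conditions ``$H\cap B\neq\emptyset$ and $a+H\subseteq A$ for some $a\in A$'' appearing in the definition of local matchability is never simultaneously satisfied. By the usual convention on universally quantified statements over an empty range, $A$ is therefore locally matched to $B$ vacuously.

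Applying Theorem~\ref{t3.1} then produces a matching from $A$ to $B$, which is exactly what the theorem claims. There is no real obstacle here; the entire content of the argument has been absorbed into Theorem~\ref{t3.1}, and the role of the present theorem is to illustrate how that machinery recovers Losonczy's result with minimal additional effort. I would emphasize in the write-up that the hypothesis ``every element of $B$ is a generator'' is precisely what is needed to kill every candidate subgroup $H$ in the local matching definition, turning a potentially non-trivial combinatorial statement into a one-line deduction.
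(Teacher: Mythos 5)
Your argument is correct: since every element of $B$ generates $G$, no proper subgroup of $G$ meets $B$, so the defining condition of local matchability is vacuously satisfied and Theorem~\ref{t3.1} yields the matching (the paper itself endorses this vacuous reading when it deduces its characterization of groups with the matching property and its corollary on $n<n(G)$ from Theorem~\ref{t3.1}). Be aware, though, that the paper offers no proof of this particular statement; it is quoted from Losonczy \cite{8} only as the model for its linear analogue, Theorem~\ref{t4.2}, which is proved by a direct argument combining the dimension criterion of Theorem~\ref{t2.4} with the linear Kneser theorem (Theorem 2.7), where primitivity of $B$ forces the stabilizer to be all of $L$ in the critical case. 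Your route is therefore genuinely different from what Section~4 contains, but it exactly parallels the Section~3 deductions: the hypotheses kill every candidate subgroup $H$, so local matchability comes for free. What your reduction buys is brevity and a further illustration that the local matching machinery subsumes Losonczy's cyclic-group theorem; what the direct Kneser-type argument buys is a self-contained proof whose structure transfers to the field-extension setting, which is the actual purpose of Section~4.
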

We say that $K\subset L$ is a simple field extension if $L=K(\alpha)$, for some $\alpha\in L$. Also, if $B$ is a $K$-subspace of $L$ such that $K(b)=L$, for any $b\in B\setminus \{0\}$, we say that $B$ is a primitive $K$-subspace of $L$.  The main ingredient in our proof is the linear version of Kneser's theorem.
\begin{theorem}\label{t4.2}
 Let $K\subset L$ be  separable field extension and $A$ and $B$ be two 
$n$-dimensional $K$-subspaces in $L$ with $n\geq1$ and $B$ is a primitive $K$-subspace of $L$.  Then $A$ is matched with 
$B$.
\end{theorem}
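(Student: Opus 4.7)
My plan is to argue by contradiction, combining the dimension criterion of Theorem~\ref{t2.4} with the linear Kneser theorem. Assume $A$ is not matched to $B$. Then Theorem~\ref{t2.4} supplies a basis $\mathcal{A}=\{a_1,\ldots,a_n\}$ of $A$ and a subset $J\subseteq\{1,\ldots,n\}$ of cardinality $m$ such that
\[
\dim_K B'\ \geq\ n-m+1,\qquad B':=\bigcap_{i\in J}\bigl(a_i^{-1}A\cap B\bigr).
\]
Setting $A':=\langle a_i:i\in J\rangle_K$, the definition of $B'$ forces $A'B'\subseteq A$. I would then set $V:=\langle A'B'\rangle_K\subseteq A$ and $H:=\{x\in L:xV\subseteq V\}$; the linear Kneser theorem applied to $A'$ and $B'$ yields
\[
\dim_K V\ \geq\ \dim_K A'+\dim_K B'-\dim_K H\ \geq\ n+1-\dim_K H,
\]
and since $\dim_K V\leq n$ we obtain $\dim_K H\geq1$.

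The next step is to identify $H$ as a proper intermediate subfield. By construction $H$ is a $K$-subalgebra of $L$; picking any nonzero $v_0\in V$, the map $h\mapsto hv_0$ injects $H$ into $V$, so $\dim_K H\leq\dim_K V\leq n$. A finite-dimensional $K$-subalgebra of the field $L$ is itself a field, so $H$ is an intermediate subfield. Primitivity of $B$ rules out $1\in B$ (else $K(1)=K=L$, contradicting $n\geq1$), so $B\subsetneq L$ and $\dim_K L>n\geq\dim_K H$; hence $H\subsetneq L$. Invoking primitivity once more, any nonzero $b\in B\cap H$ would satisfy $K(b)\subseteq H\subsetneq L$, contradicting $K(b)=L$, so $B\cap H=\{0\}$.

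The main obstacle is converting the rigid data ``$V$ is an $H$-subspace of $A$ with $\dim_K V$ pinched into $[n+1-d,n]$, and $B\cap H=\{0\}$'' (where $d:=\dim_K H$) into an honest contradiction. My plan for this crux is to use the $H$-action a second time: since $H$ stabilizes $V$, we have the identities $V=\langle HA'\cdot B'\rangle=\langle A'\cdot HB'\rangle$, so applying the linear Kneser theorem to $(HA',B')$ and $(A',HB')$ gives
\[
\dim_K HA'\leq m+d-1,\qquad \dim_K HB'\leq n-m+d,
\]
while $HA'$ and $HB'$ are both $H$-modules, so their $K$-dimensions are multiples of $d$---each pinched to a single value in its respective interval. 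Coupled with the easy observation that $HB'\subseteq\bigcap_{i\in J}a_i^{-1}A$ and hence $HB'\cap B=B'$ (so $HB'\smallsetminus B'$ lies outside $B$), the $H$-module structure should in the critical case force some nonzero $b\in B'$ with $Hb\subseteq B$; this gives a $d$-dimensional $H$-orbit inside $B$ disjoint from $H$, and the final contradiction is then supposed to come from comparing the primitivity of every nonzero element of $bH$ against the fact that $H$ properly sits inside $L$. Carrying out this last reconciliation cleanly---balancing the two Kneser-type bounds and the divisibility constraints against $B\cap H=\{0\}$ and the primitivity of $B$---is the most delicate part of the argument, and is where I expect to spend the bulk of the technical work.
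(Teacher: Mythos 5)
Your opening is exactly the paper's: contradiction, the dimension criterion of Theorem~\ref{t2.4}, and the linear Kneser theorem applied to the product of $A'=\langle a_i: i\in J\rangle$ with $B'=\bigcap_{i\in J}(a_i^{-1}A\cap B)$. Your intermediate observations ($H$ is a finite-dimensional intermediate field, $1\notin B$, $H\subsetneq L$, hence $B\cap H=\{0\}$ by primitivity) are all correct. But the proof is not complete, and the gap is at the crux. The inequality $\dim_K V\geq n+1-\dim_K H$ is too weak: the conclusion you draw from it, $\dim_K H\geq 1$, is vacuous because the stabilizer always contains $K$. The paper's key move, which you are missing, is to sharpen Kneser's defect term from $\dim_K H$ to $\dim_K(H\cap U)$ (with $U=B'$): one first adjoins $1$ to $U$, setting $U_0=U\cup\{1\}$ so that $\langle U_0S\rangle=\langle S\cup SU\rangle$ contains $S$ and is stabilized by $H$, and then replaces $U_0$ by $U'=\langle H\cup U\rangle$, which has the \emph{same} product span with $S$; applying Kneser to $(U',S)$ and inclusion--exclusion gives $\dim_K\langle S\cup SU\rangle\geq\dim_K U+\dim_K S-\dim_K(H\cap U)$. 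Since you have already shown $H\cap B=\{0\}$, hence $H\cap U=\{0\}$, this refined bound immediately yields $\dim_K\langle S\cup SU\rangle\geq n+1>\dim_K A$, a contradiction. (The paper instead splits on whether $H\cap U$ is zero, and in the nonzero branch uses primitivity to force $H=L$ -- the opposite of your $H\subsetneq L$ -- leading to $A=L$ and a quick contradiction.) So you are one substitution away from a proof, but that substitution is the essential idea.

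The endgame you actually sketch does not close the argument. The two further Kneser applications and the divisibility of $\dim_K HA'$ and $\dim_K HB'$ by $d=\dim_K H$ are fine as stated, but the step ``the $H$-module structure should force some nonzero $b\in B'$ with $Hb\subseteq B$'' is not justified, and even granting it, the proposed final contradiction evaporates: there is nothing inconsistent about every nonzero element of $Hb$ generating $L$ over $K$ while $H$ is a proper subfield (elements $hb$ with $h\in H\setminus\{0\}$, $b\notin H$ can certainly all be primitive). So the ``delicate part'' you defer is not a technical reconciliation but a missing idea; as written, the case $\dim_K H\geq 2$ is not refuted.
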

\begin{proof}
 Assume that  $A$ is not  matched  to $B$. Using Theorem \ref{t2.4}, one can find  $J\subseteq\{1,\ldots,n\}$ 
and a basis $\mathcal A=\{a_1,\ldots,a_n\}$ of $A$ such that
\begin{equation}\label{eqn7}
\underset{i\in J}{\bigcap}\left(a_i^{-1}A\cap B\right)>n-\# J.
\end{equation}
Set  $S=\langle a_i:\; i\in J\rangle$ the $K$-subspace of $A$ spanned by $a_i$'s, $i\in J$, $U=\underset{i\in J}{\bigcap}\left(a_i^{-1}A\cap B\right)$ and  $U_0=U\cup\{1\}$. Now, by Theorem 2.7 one can find a subfield $H$ of $L$ such that
\begin{eqnarray*}
\dim_K\langle U_0S\rangle &\geq& \dim_KU_0+\dim_KS-\dim_KH,
\end{eqnarray*}
where $H$ is the stabilizer of $\langle U_0S\rangle$, i.e. $H=\{x\in L:\; x\langle U_0S\rangle\subseteq \langle U_0S\rangle\}$. Define  $U'=\langle H\cup U\rangle$. Using Theorem 2.7 again, we have 
\[\dim_K\langle U'S\rangle \geq \dim_K \langle U'\rangle +\dim_K\langle S\rangle -\dim_KH',\]
where $H'$  is the stabilizer of  $\langle U'S\rangle$. Next, we have
\begin{eqnarray}\label{eqn8}
\langle U'S\rangle&=&\langle (H\cup U)S\rangle =\langle HS\cup U_0S\rangle\nonumber\\
&=&\langle HS\cup HU_0S \rangle =H\langle S\cup U_0S\rangle\nonumber\\
&=&H\langle U_0S\rangle =\langle U_0S\rangle.
\end{eqnarray}
This follows $H=H'$ and  then
\begin{equation}\label{eqn9}
\dim_K\langle U'S\rangle \geq\dim_K\langle U'\rangle +\dim_KS-\dim_KH.
\end{equation}
Using \eqref{eqn8} and \eqref{eqn9}, we have 
\begin{eqnarray}\label{eqn10}
\dim_K\langle U_0S\rangle&\geq& \dim_KU'+\dim_KS-\dim_KH\nonumber\\
&=& \dim_K\langle H\cup U\rangle+\dim_KS-\dim_KH.
\end{eqnarray}
Using \eqref{eqn10}, the fact that $\langle U_0S\rangle=\langle S\cup SU\rangle$ and the inclusion-exclusion principle for vector spaces we have:
\begin{eqnarray}\label{eqn11}
\dim_K\langle S\cup SU\rangle&\geq& \dim_K \langle H\cup U\rangle+\dim_KS-\dim_KH\nonumber\\
&=& \dim_K H+\dim_K U -\dim_K(H\cap U)+\dim_KS-\dim_KH\nonumber\\
&=&\dim_K U+\dim_KS-\dim_K(H\cap U).
\end{eqnarray}
Now, we have two cases for the subspace $H\cap U$.
\begin{enumerate}
\item
If $H\cap U=\{0\}$, then  \eqref{eqn7} and \eqref{eqn11} imply  $\dim_K \langle S\cup SU \rangle\geq n$ and this is impossible as   
$S\cup SU\subseteq A$ and $\dim_KA=n$.%\\
\item
If $H\cap U\neq\{0\}$, then $H\cap B\neq\{0\}$ and since $B$ is a primitive subspace of $L$, then $H=L$. By the definition of $U$ and $S$, $HUS\subseteq A$ and  this follows $LUS\subseteq A$ and so $A=L$. Then $B=L$ as  $\dim_K A=\dim_K B$ and this 
means $K\subseteq B$. Therefore if $a\in K\setminus\{0\}$, then $K=K(a)=L$, which is impossible. 
\end{enumerate}
In both cases, we extract contradictions. Then $A$ is matched to $B$.
\end{proof}
\section{Local  matching property for subspaces in a field extension}
The following theorem shows that the linear local  matching property implies  the linear matching property  for subspaces of a field extension whose algebraic  elements are separable. Note that this result can probably be reformulated for any field extension $K\subset L$ without any condition on separability.
\begin{theorem}\label{t5.1}
 Let $K\subset L$ be a field extension in which every algebraic element of $L$ is separable over 
$K$. Let $A, B\subset L$ be two non-zero $n$-dimensional $K$-subspaces with $1\not\in B$. If $A$ is locally matched 
to $B$, then   $A$ is matched to $B$.
\end{theorem}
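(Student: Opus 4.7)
The plan is to adapt the argument of Theorem~\ref{t3.1} to the linear setting, substituting Theorem~\ref{t2.4} (the dimension criterion of Eliahou--Lecouvey) for Hall's marriage theorem and the linear Kneser theorem (Theorem~2.7, which is precisely where separability enters) for the group-theoretic Kneser theorem. The dimension computation will run in parallel with the proof of Theorem~\ref{t4.2}.

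First I would assume for contradiction that $A$ is not matched to $B$ and apply Theorem~\ref{t2.4} to extract a basis $\{a_1,\ldots,a_n\}$ of $A$ and a non-empty $J\subseteq\{1,\ldots,n\}$ with $\dim_K U+\dim_K S>n$, where $S=\langle a_i:i\in J\rangle$ and $U=\bigcap_{i\in J}(a_i^{-1}A\cap B)$; by construction $SU\subseteq A$. Setting $U_0=U\cup\{1\}$ and invoking Theorem~2.7 on $U_0$ and $S$, the chain of identities (8)--(11) from the proof of Theorem~\ref{t4.2} carries over verbatim and yields
\[
\dim_K\langle S\cup SU\rangle\ \geq\ \dim_K U+\dim_K S-\dim_K(H\cap U),
\]
where $H$ is the intermediate subfield of $L$ stabilizing $\langle U_0S\rangle$.

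Next I would perform the case split on the stabilizer $H$. If $H\cap U=\{0\}$, the displayed inequality forces $\dim_K\langle S\cup SU\rangle>n$, contradicting $S\cup SU\subseteq A$. If $H=L$, then $\langle U_0S\rangle$ is a nonzero $L$-subspace of $L$ contained in $A$, so $A=L$ and hence $B=L$ by dimension, violating $1\notin B$. Otherwise $K\subseteq H\subsetneq L$ with $H\cap B\supseteq H\cap U\neq\{0\}$, and any nonzero $a\in S$ satisfies $aH\subseteq HS\subseteq H\langle U_0S\rangle=\langle U_0S\rangle\subseteq A$, so the local matching hypothesis (Definition~3) furnishes a subspace $\tilde A\subseteq A$ of dimension $m=\dim_K(H\cap B)$ that is $A$-matched to $H\cap B$.

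The crux is then to show $\tilde A\cap\langle U_0S\rangle=\{0\}$. If this failed, I would pick a basis $\{a_1,\ldots,a_m\}$ of $\tilde A$ whose first vector $a_1$ lies in the intersection; the $A$-matching then produces a basis $\{b_1,\ldots,b_m\}$ of $H\cap B$ with $a_ib_i\notin A$ for all $i$, but $b_1\in H$ together with $a_1\in\langle U_0S\rangle$ and stabilization give $a_1b_1\in\langle U_0S\rangle\subseteq A$, a contradiction. Trivial intersection then yields $\dim_K\tilde A+\dim_K\langle U_0S\rangle\leq n$, and combining this with $\dim_K\tilde A\geq\dim_K(H\cap U)$ and the Kneser inequality above collapses to $\dim_K U+\dim_K S\leq n$, contradicting our starting inequality. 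The main obstacle is carrying out the case split cleanly: Definition~3 only applies to \emph{proper} intermediate subfields, so the case $H=L$ has to be excluded separately via the hypothesis $1\notin B$, and one must also confirm that the Kneser stabilizer is genuinely an intermediate subfield containing $K$ before Definition~3 becomes applicable.
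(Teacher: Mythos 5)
Your proposal is correct and follows essentially the same route as the paper: contradiction via Theorem \ref{t2.4}, the linear Kneser theorem applied to $U_0S$ and $U'S$ to reach $\dim_K\langle S\cup SU\rangle\geq\dim_K U+\dim_K S-\dim_K(H\cap U)$, and the same two-case analysis on $H\cap U$ using the local matching hypothesis to force $\tilde A\cap\langle U_0S\rangle=\{0\}$. Your explicit exclusion of the case $H=L$ via $1\notin B$ is exactly the content the paper defers to the remark following its proof, so this is a point of care rather than a divergence.
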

\begin{proof}
 Assume to the contrary $A$ is not matched to $B$.  Then, by Theorem 2.3 there exist a basis 
$\mathcal{A}=\{a_1,\ldots,a_n\}$ of $A$ and $J\subseteq\{1,\ldots,n\}$ such that $\dim_K\underset{i\in J}{\bigcap}\left(a_i^{-1}A\cap B\right)>n-\# J$. Set $S=\langle a_i:\; i\in J\rangle$ as a $K$-subspace of $A$, 
$U=\underset{i\in J}{\bigcap}(a_i^{-1}A\cap B)$ and $U_0=\langle U\cup \{1\}\rangle$. 
Using Theorem 2.7 there exists an intermediate subfield $H$ of $K\subset L$ such that 
\begin{equation}\label{eqn13}
\dim_K\langle U_0S\rangle\geq\dim_K U_0+\dim_KS-\dim_KH,
\end{equation}
where $H$ is the stabilizer of $\langle U_0S\rangle$. Define  $U'=H\cup U$. Reusing Theorem 2.7, one can find an intermediate subfield $H'$ of $K\subset L$ for which 
\begin{equation}\label{eqn14}
\dim_K \langle U'S\rangle \geq \dim_K \langle U'\rangle+\dim_KS-\dim_KH',
\end{equation}
where $H'$ is the stabilizer  of  $\langle U'S\rangle$. The following computations show that $\langle  U'S\rangle=\langle U_0S\rangle$;
 \begin{eqnarray}\label{eqn15}
 \langle U'S\rangle&=&\langle (H\cup U)S\rangle =\langle HS\rangle\cup\langle U_0S\rangle\nonumber\\
 &=&\langle HS\rangle\cup\langle U_0SH\rangle=H\langle S\cup U_0S\rangle\nonumber\\
 &=&H\langle U_0S\rangle=\langle U_0S\rangle.
 \end{eqnarray}
 Then, the stabilizers of these two subspaces must be the same, i.e. $H=H'$. Then we would  have 
 \begin{equation}\label{eqn16}
  \dim_K\langle U'S\rangle\geq\dim_K\langle   U'\rangle+\dim_KS-\dim_KH.
 \end{equation}
Bearing \eqref{eqn14} and \eqref{eqn15} in mind and using the inclusion-exclusion principle for vector spaces we obtain: 
\begin{eqnarray}\label{eqn17}
\dim_K\langle U_0S\rangle&=& \dim_K\langle U'S\rangle\nonumber\\
&\geq&\dim_K\langle U'\rangle +\dim_KS-\dim_KH\nonumber\\
&=&\dim_K\langle H\cup U\rangle+\dim_KS-\dim_KH\nonumber\\
&=& \dim_KH+\dim_KU-\dim_K(H\cap U)+\dim_KS-\dim_KH\nonumber\\
&=&\dim_KU+\dim_KS-\dim_K(H\cap U).
 \end{eqnarray} 
 Now, we have two cases for $H\cap U$.
 \begin{enumerate}
 \item
If $H\cap U=\{0\}$, then $\dim_K\langle S\cup SU\rangle>n$. On the other hand since $S\cup SU\subseteq A$, we would 
have $\dim_KA>n$, contradicting our assumption $\dim_KA=n$.
\item
If  $H\cap U$ is a non-zero vector space, then $H\cap B$ is non-zero. 
It is clear that $aH\subseteq A$, for some $a\in A$ (Indeed, $USH\subseteq A$).
 Since $A$ is locally matched to $B$, one can find a subspace $\tilde{A}$ of $A$ such that $\tilde{A}$ is $A$-matched 
 to $H\cap B$. Let $\tilde{A}\cap \langle U_0S\rangle\neq\{0\}$ and choose a non-zero element $a$ of it.  We extend $\{a\}$ to a basis $\{a, 
 a_2,\ldots, a_m\}$ for  $\tilde{A}$. Then,  there exists a basis $\{b, b_2, \ldots, b_m\}$ of $H\cap B$ such that $ab 
 \not\in A$ and $a_ib_i \not\in A$, where $2 \leq i \leq m$, as $A$ is locally matched to $B$. But, we have $ab \in \langle U_0S\rangle H=\langle U_0S\rangle \subseteq A$, which contradicts the case $\tilde{A}$ is $A$-matched to $H\cap B$. So $\tilde{A}\cap U_0S=\{0\}$. Then, $\dim_K \tilde{A} + \dim_K\langle U_0S\rangle \leq n$.
This yields $\dim_K\langle H\cap U\rangle+\dim_K\langle (U\cup\{1\})S\rangle \leq n$. This follows $\dim_K\langle H\cap 
U\rangle+\dim_K\langle S\cup SU\rangle\leq n$. So, by \eqref{eqn17} we have $\dim_KU+\dim_KS\leq n$, which is impossible.
 \end{enumerate}
 Then in both cases, we extract contradictions and so  $A$ is matched to $B$. 
\end{proof}
\begin{remark}
Note that in the second  case above, $H\neq L$. We justify this  as follows; assume to the contrary $H=L$. Then we have 
\[
[L:K]=[H:K]\leq \dim_KH\langle U_0S\rangle=\dim_K\langle  U_0S\rangle=\dim_K\langle US \cup S\rangle\leq\dim_KA=\dim_KB.
\]
 Then $B=L$ and this contradicts $1\not\in B$.
\end{remark}
\begin{example}
Assume that $K=\mathbb{Q}$ and $L=\mathbb{Q}(\sqrt[35]{2})$.  Let $A$ be the subspace of those elements of $\mathbb{Q}(\sqrt[5]{2})$ whose trace over $\mathbb{Q}$ is zero. Then $\dim_kA=4$. Let $V$ be the subspace of those elements of $\mathbb{Q}(\sqrt[7]{2})$ whose trace over $\mathbb{Q}$ is zero and  take a four-dimensional subspace $B$ of $V$.
Clearly, for every non-trivial intermediate subfield $H$ of $K\subseteq L$, $[H:K]\geq5$ and then  $aH\nsubseteq A$, for any $a\in A$. Thus $A$ is locally matched to $B$. Using Theorem \ref{t5.1}, $A$ is matched to $B$.
\end{example}
\begin{corollary}
Let $K\subset L$ be a field extension in which every algebraic element of $L$ is separable over $K$. If $K\subset L$  has the local linear matching property, then it possesses the linear  matching property.
\end{corollary}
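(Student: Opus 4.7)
The plan is to obtain the corollary as an essentially immediate consequence of Theorem \ref{t5.1}, by peeling back the definitions of the two properties in question. Both "linear local matching property" and "linear matching property" are universally quantified over pairs $(A,B)$ of equidimensional $K$-subspaces of $L$ with $1\notin B$, so it suffices to prove the desired implication pointwise for each such pair.

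Concretely, I would fix an integer $n\ge 1$ and arbitrary $n$-dimensional $K$-subspaces $A,B\subset L$ with $1\notin B$. The hypothesis that $K\subset L$ has the linear local matching property then says, verbatim, that $A$ is locally matched to $B$ in the sense of the definition in Section~2. Since, by assumption, every algebraic element of $L$ is separable over $K$, the pair $(A,B)$ satisfies precisely the hypotheses of Theorem \ref{t5.1}. Invoking that theorem yields that $A$ is matched to $B$. As the chosen $n$, $A$, and $B$ were arbitrary subject to $1\notin B$, this is exactly the statement that $K\subset L$ has the linear matching property.

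There is really no hard step here: the separability hypothesis is needed only insofar as Theorem \ref{t5.1} itself requires it (because its proof leans on the linear analogue of Kneser's theorem, which in turn needs separability), and the $1\notin B$ condition is carried identically through the two definitions, so no extra care is required. The one place worth double-checking is simply that the quantifier structures of the two properties align cleanly, which they do: both range over the same class of pairs $(A,B)$, and "$A$ is locally matched to $B$ implies $A$ is matched to $B$" is exactly the content of Theorem \ref{t5.1}. Thus the corollary follows without additional argument.
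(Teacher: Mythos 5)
Your proposal is correct and matches the paper's intent exactly: the paper states this corollary without proof precisely because it is the immediate pointwise application of Theorem \ref{t5.1} to each pair $(A,B)$ quantified over in the two definitions, which is what you spell out. Nothing further is needed.
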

As we mentioned, in Theorem \ref{t5.1}  the extension $K\subset L$ is assumed to have all its algebraic elements separable. Are these results valid without this hypothesis? We conjecture that this is the case.
\begin{conjecture}
Let  $K\subset L$ be a field extension. Then $K\subset L$ possesses the linear matching property if it possesses the local linear matching property.
\end{conjecture}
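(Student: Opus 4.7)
The plan is to mirror the proof of Theorem~\ref{t5.1} step by step, so as to isolate precisely where the separability hypothesis was used; every other step is either field-theoretic or set-theoretic and should carry over unchanged.

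First I would reproduce the setup: assuming for contradiction that $A$ is locally matched to $B$ but not matched to it, Theorem~\ref{t2.4} supplies a basis $\mathcal{A}=\{a_1,\dots,a_n\}$ of $A$ and a subset $J\subseteq\{1,\dots,n\}$ with
\[
\dim_K\bigcap_{i\in J}\bigl(a_i^{-1}A\cap B\bigr) \;>\; n-\#J.
\]
Setting $S=\langle a_i:i\in J\rangle$, $U=\bigcap_{i\in J}(a_i^{-1}A\cap B)$ and $U_0=\langle U\cup\{1\}\rangle$, the identity $\langle(H\cup U)S\rangle=\langle U_0 S\rangle$ used in~\eqref{eqn15}, the dichotomy on $H\cap U$, and the local-matching argument of Case~(2) are purely algebraic and do not touch separability.

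The single step demanding separability is the linear Kneser inequality (Theorem~2.7) that yields
\[
\dim_K\langle U_0 S\rangle \;\geq\; \dim_K U_0 + \dim_K S - \dim_K H,
\]
with $H$ the stabilizer of $\langle U_0 S\rangle$. Since Theorem~2.7 is stated only under the separability hypothesis, the strategy reduces to establishing a replacement. A natural form is the relaxed inequality
\[
\dim_K\langle U_0 S\rangle \;\geq\; \dim_K U_0 + \dim_K S - \dim_K H - \varepsilon,
\]
with defect $\varepsilon=\varepsilon(L/K)$ controlled by the inseparable degree of $H$ or $L$ over $K$. Substituting this into the remainder of the argument would give $\dim_K\langle S\cup SU\rangle\geq n-\varepsilon$ in Case~(1) and $\dim_K U+\dim_K S\leq n+\varepsilon$ in Case~(2); both remain contradictions as soon as $\varepsilon$ is strictly smaller than the slack provided by Theorem~\ref{t2.4}.

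The main obstacle is producing such a bound. I see two routes worth pursuing. The first is to invoke a strengthened linear Kneser-type inequality of Hou--Leung--Xiang flavour, in which the defect is an explicit function of the inseparable part of the stabilizer; one must then verify that it is absorbed by the slack above. The second is a Frobenius-twist reduction: in characteristic $p$, consider the $K^{p^e}$-subspaces generated by $a^{p^e}$ for $a\in A$ and $b^{p^e}$ for $b\in B$, chosen so that the induced field extension becomes separable (or one passes to the perfection). Since Frobenius is an injective ring endomorphism, one hopes to transport matching data bijectively under $p^e$-th powering and apply Theorem~\ref{t5.1} to the twisted problem. The chief technical difficulty in either route is preserving the \emph{local} matching hypothesis under the reduction: a local matching $\tilde A\to H\cap B$ in the original setting must be shown to descend to one for the twisted or base-changed spaces. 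Establishing this transfer---or equivalently producing the correct defect inequality---is where I expect the real work to lie, and it is presumably the reason the authors state the result only as a conjecture.
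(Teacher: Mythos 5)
The statement you are asked about is presented in the paper as a \emph{conjecture}: the authors give no proof of it, and explicitly pose it as open. So the only question is whether your proposal actually establishes it, and it does not --- it is a reduction to an unproven ingredient, and you say as much in your last sentence. Concretely, your first route founders on a quantitative point. The slack coming from Theorem \ref{t2.4} is exactly one: since dimensions are integers, the failure of matching gives $\dim_K U \geq n-\#J+1$, and the contradiction in Case (1) of the proof of Theorem \ref{t5.1} is $\dim_K\langle S\cup SU\rangle \geq \dim_K U+\dim_K S \geq n+1 > n=\dim_K A$. A relaxed Kneser inequality with defect $\varepsilon$ therefore only closes the argument when $\varepsilon<1$, i.e.\ $\varepsilon=0$, which means you need the linear Kneser inequality in full strength without the separability hypothesis. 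That is not a technicality to be absorbed; it is precisely the hard content of the conjecture, and it is known that naive extensions of the linear Kneser theorem beyond the separable case are delicate (the inseparable degree contributes defects that are powers of $p$, hence at least $2$ when nontrivial, far exceeding the available slack).

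Your second route has the same character: Frobenius $x\mapsto x^{p^e}$ is additive and multiplicative but only $K^{p^e}$-semilinear, so the image of a $K$-subspace $A$ is not a $K$-subspace, and the condition $a_ib_i\notin A$ neither implies nor is implied by the corresponding condition for the twisted spaces over $K^{p^e}$. The transfer of the \emph{local} matching hypothesis --- the existence, for every intermediate field $H$ of the twisted extension with the required properties, of an $A$-matched subspace --- is exactly the step you flag as unresolved, and without it the reduction to Theorem \ref{t5.1} does not go through. In short, your analysis of \emph{where} separability enters is accurate and matches the structure of the paper's proof of Theorem \ref{t5.1}, but no part of the proposal supplies the missing inequality or the missing transfer, so the conjecture remains unproven by this argument.
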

Using Theorems \ref{t5.1}, we give a short proof of a special case of Theorem \ref{t2.5}.\\
%an alternative proof to the authors' main theorem in \cite{6} which states that the necessary and sufficient condition for a field extension having linear matching property is not having any proper intermediate field with finite degree. However, our proof works just in the case that every algebraic element is assumed to be separable. Note that we use this fact that matchability and local matchability are  equivalent in such a field extension. 
Let $K\subset L$ be a field extension whose algebraic elements are separable and has no proper intermediate field with a finite degree. If $A$ and $B$ are two $n$-dimensional $K$-subspaces of $L$ with $n\geq1$ and $1\not\in B$, clearly $A$  is locally matched to $B$ and then $A$ is matched to $B$. This means $K\subset L$ has the linear matching property.
\begin{remark}
That the linear matching property implies the local linear matching property is immediate from Theorem \ref{t2.5}. However, If $A$ is matched to $B$, in the field extension setting sense, whether $A$ is locally matched to $B$ is still unsolved. This is valid is some specific cases. For example, if there is a strong matching from $A$ to $B$, one can prove that $A$ is locally matched to $B$. Further investigations along those lines  could prove to be worthwhile.
\end{remark}
%Conversely, assume that $K\subseteq L$ has the proper intermediate field $H$ of finite degree. Set $A=H$, choose $x\in L\setminus H$ and define $B=\langle H\cup \{x\}\setminus K\rangle$. Clearly $\dim_KA=\dim_KB$, $H\cap B\neq \{0\}$ and $aH\subseteq H=A$, for some $a\in A$. (Indeed for any $a\in A$). If $A$ is locally matched to $B$, then  a $K$-subspace $\tilde{A}$ of $A$ is $A$-matched to $H\cap B$. If $\{a_1,\ldots,a_m\}$ is a basis for $\tilde{A}$, then there must be a basis $\{b_1,\ldots,b_m\}$ of $H\cap B$ for which $a_ib_i\not\in A$, for $1\leq i\leq m$. But  this is impossible because $a_ib_i\in A(H\cap B)\subseteq H\langle H\setminus K\rangle \subseteq H=A$. Then, $A$ is not locally matched to $B$ and so $A$ is not matched to $B$. Thus $K\subseteq L$ does not have the linear matching property.
\section*{Acknowledgement}
We are grateful to Professor Shmuel Friedland for his useful suggestions and comments and detecting several mistakes in the earlier version of the proof of Theorem \ref{t3.1}. We would  also like to thank the referees for making several useful comments.

\end{document}